\definecolor{verylight}{gray}{0.97}
\definecolor{light}{gray}{0.9}
\definecolor{medium}{gray}{0.85}
\def\frk{\mathfrak}               
\def\Phi{{\frk N}}
\def\opn#1#2{\def#1{\operatorname{#2}}} 
\opn\chara{char} \opn\length{\ell} \opn\pd{pd} \opn\rk{rk}
\opn\projdim{proj\,dim} \opn\injdim{inj\,dim} \opn\rank{rank}
\opn\depth{depth} \opn\grade{grade} \opn\height{height} \opn\bheight{bigheight}
\opn\embdim{emb\,dim} \opn\codim{codim}
\opn\Tr{Tr} \opn\bigrank{big\,rank}
\opn\superheight{superheight}\opn\lcm{lcm}
\opn\trdeg{tr\,deg}
\opn\reg{reg} \opn\lreg{lreg} \opn\ini{in} \opn\lpd{lpd}
\opn\size{size}\opn{\mult}{mult} \opn{\rev}{rev}
\opn\div{div} \opn\Div{Div} \opn\cl{cl} \opn\Cl{Cl}
\opn\Spec{Spec} \opn\Supp{Supp} \opn\supp{supp} \opn\Sing{Sing}
\opn\Ass{Ass} \opn\Min{Min}
\opn\Ann{Ann} \opn\Rad{Rad} \opn\Soc{Soc}
\opn\Syz{Syz} \opn\Im{Im} \opn\Ker{Ker} \opn\Coker{Coker}
\opn\Am{Am} \opn\Hom{Hom} \opn\Tor{Tor} \opn\Ext{Ext}
\opn\End{End} \opn\Aut{Aut} \opn\id{id} \opn\ini{in}
\opn\nat{nat}
\opn\pff{pf}
\opn\Pf{Pf} \opn\GL{GL} \opn\SL{SL} \opn\mod{mod} \opn\ord{ord}
\opn\Gin{Gin}
\opn\Hilb{Hilb}\opn\adeg{adeg}\opn\std{std}\opn\ip{infpt}
\opn\Pol{Pol}
\opn\sat{sat}
\opn\Var{Var}
\opn\Gen{Gen}
\opn\indmatch{indmatch}
\opn\aff{aff} \opn\con{conv} \opn\relint{relint} \opn\st{st}
\opn\lk{lk} \opn\cn{cn} \opn\core{core} \opn\vol{vol}
\opn\link{link} \opn\star{star}
\opn\gr{gr}
\def\pot#1#2{#1[\kern-0.28ex[#2]\kern-0.28ex]}
\opn\dirlim{\underrightarrow{\lim}}
\opn\inivlim{\underleftarrow{\lim}}
\def\Implies{\ifmmode\Longrightarrow \else
        \unskip${}\Longrightarrow{}$\ignorespaces\fi}
\def\implies{\ifmmode\Rightarrow \else
        \unskip${}\Rightarrow{}$\ignorespaces\fi}
\def\iff{\ifmmode\Longleftrightarrow \else
        \unskip${}\Longleftrightarrow{}$\ignorespaces\fi}
\newtheorem{Theorem}{Theorem}[section]
\newtheorem{Lemma}[Theorem]{Lemma}
\newtheorem{Corollary}[Theorem]{Corollary}
\newtheorem{Proposition}[Theorem]{Proposition}
\newtheorem{Example}[Theorem]{Example}
\let\epsilon\varepsilon
\let\phi=\varphi
\let\kappa=\varkappa
\def\qed{\ifhmode\textqed\fi
      \ifmmode\ifinner\quad\qedsymbol\else\dispqed\fi\fi}
\def\textqed{\unskip\nobreak\penalty50
       \hskip2em\hbox{}\nobreak\hfil\qedsymbol
       \parfillskip=0pt \finalhyphendemerits=0}
\def\dispqed{\rlap{\qquad\qedsymbol}}
\opn\dis{dis}
\def\pnt{{\raise0.5mm\hbox{\large\bf.}}}
\opn\Lex{Lex}
\newcommand{\inD}[1][\relax]{\def\argone{#1}\def\temprelax{\relax}
  \ifx\argone\temprelax\right.\else\,\middle|#1\right.{}\fi}
\newif\ifbinary
\begin{document}

\title{Gorenstein binomial edge ideals associated with scrolls}

\author{ Ahmet Dokuyucu, Ajdin Halilovic, Rida Irfan}

\thanks{The third author acknowledges the support from Higher Education Commission of Pakistan.}

\address{Ahmet Dokuyucu,  Lumina-The University of South-East Europe,
Sos. Colentina nr. 64b, Bucharest,
Romania} \email{ahmet.dokuyucu@lumina.org}

\address{Ajdin Halilovic,  Lumina-The University of South-East Europe,
Sos. Colentina nr. 64b, Bucharest,
Romania} \email{ajdin.halilovic@lumina.org}

\address{Rida Irfan, Abdus Salam School of Mathematical Sciences, GC University,
68-B, New Muslim Town, Lahore 54600, Pakistan} \email{ridairfan\_88@yahoo.com}

\begin{abstract}

Let $I_G$ be the binomial edge ideal on the generic $2\times n$ - Hankel matrix associated with  a closed graph $G$ on the vertex set $[n]$. We characterize the graphs $G$ 
for which $I_G$ has maximal regularity and is Gorenstein.
\end{abstract}
\subjclass[2010]{13H10,13P10,13D02}
\keywords{Rational normal scroll, closed graph, regularity, Gorenstein ring}
\maketitle

\section*{Introduction}
Let $K$ be a field and $S=K[x_1,\dots,x_n,x_{n+1}]$ the polynomial ring in $n+1$ variables over the field $K$. 
Let $X=\begin{pmatrix}
x_{1}  & x_{2}   & \cdots     &  x_{n}      \\
x_{2}  & x_{3}   & \cdots     &  x_{n+1} 
\end{pmatrix}$
be the generic $2\times n$ - Hankel matrix and $G$ a closed graph on the vertex set $[n]$, that is, a graph satisfying the following condition: there exists a labeling of $G$ with the property that if $\{i,j\}$ and $\{i,k\}$ are edges of $G$ such that either $i<j<k$ or $i>j>k$, then $\{j,k\}$ is an edge of $G$.

Closed graphs were introduced in \cite{HHHKR} in order to characterize  binomial edge ideals which have a quadratic Gr\"{o}bner basis. However, it turns out that these graphs were already known in combinatorics as indifference graphs. Namely, by \cite[Theorem 1]{LO}, a graph $G$ is an indifference graph if and only if for every edge $\{i,k\}$ of $G$ and every $j$ with $i<j<k$, also $\{i,j\}$ and $\{j,k\}$ are edges of $G$. The latter property can be easily shown to be equivalent to the definition of a closed graph. On the other hand, the notion of indifference graph is equivalent to the notion of proper interval graph \cite{Ro}. Indifference graphs and, more general, interval graphs have been intensively studied from combinatorial and algorithmic point of view; see \cite{LO} and the references therein. 

In this paper, we will use the terminology closed graph.

 In \cite{CDE} there was considered the ideal $I_G\subset S$ which is generated by all the 2-minors $g_{ij}=
\begin{vmatrix}
x_{i}  & x_{j}      \\
x_{i+1}  & x_{j+1}  
\end{vmatrix}$
of $X$ which correspond to the edges $\{i,j\}$ of $G$. The ideal $I_G$ is a natural generalization of the ideal $I_C$ of the rational normal curve 
$C\subset\mathbb{P}^n$. Indeed, if $G=K_n$, then $I_G=I_C$. The minimal free resolution of $S/I_C$ is the Eagon-Northcott resolution.

In \cite{CDE} it was shown that, for any closed graph $G$, the ideal $I_G$ has a quadratic Gr\"obner basis with respect to the reverse lexicographic 
order on $S$ induced by $x_1>\cdots >x_{n+1}$ and that $I_G$ is Cohen-Macaulay of dimension $1+c$ where $c$ is the number of connected components of $G$. 

In the same paper it was shown that the regularity of $S/I_G$ is bounded above by the number of maximal cliques of the graph $G$. For a graph $G$, the collection of cliques of $G$ (i.e. the complete subgraphs of $G$) forms a simplicial complex $\Delta(G)$ which is called the clique complex of $G$. We recall from \cite{EHH} that $G$ is a closed graph if and only if there exists a labeling of $G$ such that all facets of $\Delta(G)$ are intervals.

Let $G$ be a closed graph on the vertex set $[n]$ with $\Delta(G)=\langle F_1,\dots,F_r\rangle$ where $ F_{i}=[a_{i},b_{i}]$ for $1\leq i\leq r$ and 
$1=a_1<a_2<\cdots <a_r<b_r=n$. Then, as it was shown in \cite{CDE}, we have reg$(S/I_G)\leq r$. For the closed graphs $G$ which satisfy the conditions $a_{i+1}=b_i$ for $1\leq i\leq r-1$, it was shown that reg$(S/I_G)$ is exactly $r$.

In Section 1 we characterize the closed graphs $G$ for which reg$(S/I_G)$ is equal to $r$. Note that, since $I_G$ and the initial ideal of $I_G$ with respect to the reverse lexicographic order, $\ini_{\rev}(I_G)$, are both Cohen-Macaulay, we have reg$(S/I_G) = \reg(S/\ini_{\rev}(I_G)) =$ deg$P(t)$, where $P(t)$ is the numerator polynomial of the Hilbert series $H_{S/I_G}(t)=H_{S/\ini_{\rev}(I_G)}(t)$. In Theorem \ref{maxreg} we show that $I_G$ has maximal regularity if and only if any three consecutive maximal cliques of $G$ have empty intersection.

From  combinatorial point of view, the proof of Theorem \ref{maxreg} is quite simple. Once we are given the intervals $I_1,\dots,I_r$, where $I_{j}=[a_{j}+1,b_{j}]$, for $1\leq j\leq r$, we may consider the simplicial complex $\Sigma$ of all subsets $\sigma\subset\{2,\dots,n\}$ which contain at most one element from each interval $I_j$. The faces of $\Sigma$ are in one-to-one correspondence with the monomials that form a vector space basis of the algebra $S/(\ini_{\rev}(I_G),x_1,x_{n+1})$. The proof of Theorem \ref{maxreg} actaully answers the following combinatorial question: when does there exist $\sigma\in\Sigma$ of cardinality $r$? The algebraic interpretation of this question is: when does $I_G$ have maximal regularity?

In Section 2 we state and prove the main theorem of this paper which characterizes the closed graphs $G$ for which $I_G$ is a Gorenstein ideal. To this aim, in Lemma~\ref{reg_is_r} we first show for connected closed graphs $G$ that, if $I_G$ is Gorenstein and it has maximal regularity, then the following numerical conditions must hold: 
\[
a_2=2, a_{i+2}=b_i+1 \text{~for~} 1\leq i\leq r-2, \text{~and~} b_{r-1}=n-1.
\]

In Theorem \ref{mainthm} we then characterize all connected closed graphs $G$ for which $I_G$ is Gorenstein. Finally, in Proposition \ref{congor} we show that $I_G$ is Gorenstein if and only if the associated ideal of each connected component of $G$ is Gorenstein. Thus, a complete characterization of closed graphs $G$ for which $I_G$ is a Gorenstein ideal is given. The proof uses in principal combinatorial techniques.

Coming back to the above combinatorial interpretation of the regularity of $I_G$, note that imposing the condition that $\Sigma$ has a single facet of cardinality $r$ does not solve  our problem, since the Gorensteiness of $I_G$ does not automatically imply maximal regularity. This fact makes the proof of Theorem \ref{mainthm} much more complicated.

\section{Scroll binomial edge ideals of maximal regularity}\label{Rida}

Let $G$ be a closed graph on the vertex set $[n]$ and with the clique complex $\Delta (G) =\langle F_{1},\ldots, F_{r}\rangle$ where $ F_{i}=[a_{i},b_{i}]$, for $1\leq i\leq r$, and $1=a_{1} < a_{2} < \cdots < a_{r} < b_{r}=n$.
In \cite{CDE} it was shown that $\reg S/I_{G}\leq r$. Moreover, in the same paper it was shown that if the cliques of $G$ satisfy the conditions $a_{i+1}=b_{i}$ for $1\leq i \leq r-1$, then $\reg S/I_{G}=r$. In this section we  give a full characterization of the graphs $G$ with  the property that $\reg S/I_{G}=r$.

Before giving this characterization, we prove a nice property of the graphs considered in \cite{CDE}. The classical binomial edge ideals share a similar property; see \cite[Proposition~3.2]{EHH}.

\begin{Proposition}\label{bettieq}
Let $G$ be a closed graph with the maximal cliques $F_{i}=[a_{i},b_{i}]$ for $1\leq i\leq r$. If $a_{i+1}=b_{i}$ for $1\leq i \leq r-1$, then, for all $i,j$, we have \[\beta_{ij}(S/I_{G})=\beta_{ij}(S/\ini_{\rev}( I_{G})).\]
\end{Proposition}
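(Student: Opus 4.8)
The plan is to compare the two minimal free resolutions through the flat Gr\"obner degeneration and then to rule out cancellation when one deforms the initial ideal back to $I_G$. For any term order one always has the universal inequality $\beta_{ij}(S/I_G)\le\beta_{ij}(S/\ini_{\rev}(I_G))$, so it suffices to establish the reverse inequality, equivalently that the two modules have the same total graded Betti numbers. I would first compute the right-hand side completely. The minor is $g_{ik}=x_ix_{k+1}-x_{i+1}x_k$, and in the reverse lexicographic order induced by $x_1>\cdots>x_{n+1}$ its leading term is $x_{i+1}x_k$ (which is $x_{i+1}^2$ when $k=i+1$). Hence the minimal generators of $\ini_{\rev}(I_G)$ are the quadrics $x_{i+1}x_k$ with $\{i,k\}\in E(G)$, and running over the edges of a clique $F_j=[a_j,b_j]$ these are exactly all degree $2$ monomials in the variables indexed by $I_j=[a_j+1,b_j]$. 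Thus $\ini_{\rev}(I_G)=\sum_{j=1}^r\mm_{I_j}^2$, where $\mm_{I_j}=(x_t:t\in I_j)$, and the hypothesis $a_{j+1}=b_j$ is precisely what forces the intervals $I_1,\dots,I_r$ to be pairwise disjoint and to partition $\{2,\dots,n\}$.

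Because the blocks $\mm_{I_j}^2$ then involve pairwise disjoint sets of variables, $S/\ini_{\rev}(I_G)$ is, up to the inert variables $x_1,x_{n+1}$, the tensor product over $K$ of the algebras $R_j/\mm_{I_j}^2$, where $R_j$ is the polynomial ring in the variables indexed by $I_j$. Each factor $R_j/\mm_{I_j}^2$ carries the well-known linear (Eagon--Northcott type) minimal free resolution, in which the nonzero entries are $\beta_{0,0}=1$ and $\beta_{i,i+1}$ for $i\ge1$. By the K\"unneth formula the minimal resolution of $S/\ini_{\rev}(I_G)$ is the tensor product of these, so in homological degree $i$ the only internal degrees that occur are $j=i+s$, where $s$ is the number of blocks active in that tensor summand. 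This pins down all the numbers $\beta_{ij}(S/\ini_{\rev}(I_G))$ and, in particular, shows that the resolution is genuinely non-pure.

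The heart of the argument is to reproduce exactly these graded Betti numbers on the binomial side. Here I would argue by induction on $r$, peeling off the last clique: write $I_G=I_{G'}+I_{F_r}$, where $G'$ carries the cliques $F_1,\dots,F_{r-1}$. For a single clique the ideal $I_{F_j}$ is the defining ideal of a rational normal curve, whose Eagon--Northcott resolution is linear and has graded shifts coinciding with those of the monomial block $\mm_{I_j}^2$. The condition $a_r=b_{r-1}$ makes $G'$ and $F_r$ meet only in the single vertex $b_{r-1}$, so their generators share just the two variables $x_{b_{r-1}},x_{b_{r-1}+1}$; this near-transversality (the codimensions add up, matching the Cohen--Macaulayness of all $S/I_G$ from \cite{CDE}) is what lets the resolution of $S/I_G$ be assembled from those of $S/I_{G'}$ and $S/I_{F_r}$, ideally by a Tor-independence/K\"unneth argument and otherwise by a mapping cone with connecting map supported in the overlap. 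Tracking shifts, each newly attached clique contributes exactly the Eagon--Northcott strands of the corresponding monomial block, so the shifts produced match those computed above.

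The step I expect to be the genuine obstacle is minimality: I must show that the attached Eagon--Northcott strands do not cancel against strands already present. This cannot be read off from purity, since, as the K\"unneth computation shows, the target resolution places nonzero Betti numbers of a fixed internal degree $j$ in two \emph{consecutive} homological degrees (for $r=2$, for instance, $\beta_{2,4}$ and $\beta_{3,4}$ are both forced), so consecutive cancellation is a priori allowed and the alternating-sum constraint coming from the common Hilbert series does not by itself close the argument. The way around it is careful degree bookkeeping: because every Eagon--Northcott strand is linear, each internal degree appearing in the cone is determined by its homological degree together with the number of active cliques, and one checks that the connecting homomorphism strictly increases the active-clique count and therefore cannot identify two summands of equal internal degree. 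Establishing this forces minimality, yielding $\beta_{ij}(S/I_G)\ge\beta_{ij}(S/\ini_{\rev}(I_G))$, and combined with the universal inequality it gives the equality asserted in Proposition~\ref{bettieq}.
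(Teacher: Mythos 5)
Your computation of the initial-ideal side is correct and coincides with the paper's: under $a_{i+1}=b_i$ the leading terms $x_{i+1}x_k$ split into blocks $(x_{a_j+1},\dots,x_{b_j})^2$ in pairwise disjoint sets of variables, so the Betti polynomial of $S/\ini_{\rev}(I_G)$ factors as the product over the blocks, and the single-clique case is settled exactly as you suggest (both $I_{F_j}$ and its initial ideal have linear resolutions, and a linear resolution is determined by the Hilbert series, which Gr\"obner degeneration preserves). The genuine gap is in your inductive step on the binomial side, and both routes you offer fail as stated. A K\"unneth/tensor argument needs $\Tor_k(S/I_{G'},S/I_{F_r})=0$ for $k>0$, and since $I_{G'}$ and $I_{F_r}$ share the variables $x_{b_{r-1}},x_{b_{r-1}+1}$ this is not automatic and you give no proof of it. A mapping cone would come from the exact sequence $0\to S/(I_{G'}\cap I_{F_r})\to S/I_{G'}\oplus S/I_{F_r}\to S/I_G\to 0$, so you would first have to resolve $I_{G'}\cap I_{F_r}$, which you never compute; and your proposed escape from the minimality problem --- that ``the connecting homomorphism strictly increases the active-clique count'' --- is an assertion, not an argument: the active-clique decomposition exists only on the monomial side, you do not define it on a resolution of $S/I_G$, and you do not verify the claimed behaviour of the comparison map. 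As you yourself observe, internal degrees alone permit consecutive cancellation (e.g.\ $\beta_{2,4}$ and $\beta_{3,4}$ are both nonzero for $r=2$), so without this unproved structural claim the argument does not close.

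The missing ingredient is precisely the tool the paper invokes: \cite[Proposition 3.13]{BC}, which transfers Tor-vanishing from initial ideals to the ideals themselves. Since the monomial blocks $(M_i)$ involve disjoint variables, $\Tor_k(S/(M_i),S/(M_j))=0$ for all $i\neq j$ and $k>0$, and by \cite[Proposition 3.13]{BC} this yields $\Tor_k(S/I_{F_i},S/I_{F_j})=0$ for $i\neq j$ and $k>0$, whence $B_{S/I_G}(s,t)=\prod_{i=1}^r B_{S/I_{F_i}}(s,t)$ with no mapping cone and no minimality analysis; everything then reduces to the single-clique case you already handle. If you wish to salvage your induction, replace the cone by this Tor-independence statement; proving $\Tor_k(S/I_{G'},S/I_{F_r})=0$ directly, without passing through the initial ideals, is exactly the hard point your proposal leaves open.
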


\begin{proof}
We essentially follow the proof of \cite[Proposition~3.2]{EHH}.

For a graded $S$-module $M$, let $B_{M}(s,t)= \sum_{i,j} \beta_{ij}s^i t^j$ be the Betti polynomial of $M$. We have
$\ini_{\rev}(I_{G})=(x_{2},\ldots,x_{a_{2}})^2 + (x_{a_{2}+1},\ldots,x_{a_{3}})^2 +\cdots + (x_{a_{r}+1},\ldots,x_{n})^2$. Let $M_{i}$ be the minimal monomial generating set of $(x_{a_{i}+1},\ldots,x_{a_{i+1}})^2$ for $1\leq i \leq r$, where $a_{1}=1$ and $a_{r+1}=n$. Then, for any $i\neq j$, we have $M_{i}\cap M_{j}=\emptyset$. It follows that
\begin{equation}\label{*}
 \Tor_{k}(S/(M_{i}),S/(M_{j}))=0 \text{ for all }i\neq j \text{ and }k> 0.
\end{equation}

This implies that $$ B_{S/\ini_{\rev}(I_{G})}(s,t)=\prod_{i=1} ^r B_{S/(M_i)}(s,t).$$
On the other hand, by \cite[Proposition 3.13]{BC}, relation (\ref{*}) implies that $\Tor_{k}(S/I_{F_{i}},S/I_{F_{j}})=0$ for all $i\neq j$ and $k>0$. Therefore, we get
\begin{equation}\label{**}
B_{S/I_{G}}(s,t)=\prod_{i=1} ^r B_{S/I_{F_i}}(s,t).
\end{equation}
In order to prove our statement, it is enough to show that, if $G$ consists of a single clique, then $\beta_{ij}(S/I_{G})=\beta_{ij}(S/\ini_{\rev}(I_{G}))$. But, if $G$ is a clique on the vertex set $[n]$, then $\ini_{\rev}(I_{G})=(x_{2},\ldots,x_{n})^2$ has a linear resolution. Then $I_{G}$ has a linear resolution as well. Consequently, the Hilbert series of $S/I_{G}$ and $S/\ini_{\rev}(I_{G})$ are determined by the corresponding Betti numbers. As $S/I_{G}$ and $S/\ini_{\rev}(I_{G})$ have the same Hilbert series, it follows that $ \beta_{ij}(S/I_{G})=\beta_{ij}(S/\ini_{\rev}(I_{G}))$ for all $i,j$.
\end{proof}

In what follows, we characterize the graphs $G$ whose associated ideal $I_{G}$ has a maximal regularity. First, we show that we may reduce to the connected case.

Let $G$ be a closed graph on the vertex set $[n]$ and with the connected components $G_{1},\ldots,G_{c}$. Let $r_{i}$ be the number of cliques of $G_{i}$ for $1\leq i\leq c$, and $r=r_{1}+\cdots+r_{c}$. By the proof of \cite[Theorem 2.7]{CDE}, it follows that 
\[
\reg(S/I_G)\leq \sum_{i=1}^c\reg(S/I_{G_i})\leq \sum_{i=1}^c r_i=r.
\] Thus, $I_G$ has maximal regularity if and only if each $I_{G_i}$ has maximal regularity.

 We may prove now the main statement of this section.

\begin{Theorem}\label{maxreg}
Let $G$ be a  closed graph on the vertex set $[n]$ with the maximal cliques $F_{1},\ldots,F_{r}$, where $F_{i}=[a_{i},b_{i}]$ for $1\leq i\leq r$, 
$1=a_{1} < a_{2} < \cdots < a_{r} < b_{r}=n$.
Then $\reg(S/I_{G})=r$ if and only if $F_{i}\cap F_{i+1}\cap F_{i+2}=\emptyset$ for $1\leq i\leq r-2$.
\end{Theorem}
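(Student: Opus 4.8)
The plan is to reduce the statement to a transparent counting problem about monomials and then settle both implications by hand. First I would pass to the connected case: by the reduction recorded just before the theorem, $I_G$ has maximal regularity if and only if each connected component does, and cliques lying in different components are disjoint, so $F_i\cap F_{i+1}\cap F_{i+2}=\emptyset$ is automatic whenever the three cliques are not all in one component. Hence assume $G$ connected, so that consecutive cliques overlap ($a_{i+1}\leq b_i$) and the right endpoints are strictly increasing ($b_1<\cdots<b_r$). Since $I_G$ and $\ini_{\rev}(I_G)$ are Cohen--Macaulay with the same Hilbert series, $\reg(S/I_G)=\deg P(t)$, the degree of the numerator of $H_{S/I_G}(t)$.

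Next I would compute $P(t)$ via the initial ideal. Exactly as in Proposition~\ref{bettieq} (whose description of $\ini_{\rev}(I_G)$ comes from the quadratic Gr\"obner basis of \cite{CDE} and is valid for every closed $G$), one has $\ini_{\rev}(I_G)=\sum_{i=1}^r (x_{a_i+1},\ldots,x_{b_i})^2$. The variables $x_1,x_{n+1}$ do not occur, so with $R=K[x_2,\ldots,x_n]$ and $J'=\sum_i (x_{a_i+1},\ldots,x_{b_i})^2$ we get $H_{S/\ini_{\rev}(I_G)}(t)=H_{R/J'}(t)/(1-t)^2$, and $R/J'$ is Artinian because the intervals $I_j=[a_j+1,b_j]$ cover $\{2,\ldots,n\}$, forcing every $x_k^2\in J'$. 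Thus $P(t)=H_{R/J'}(t)$. A monomial avoids $J'$ precisely when it is squarefree, $\prod_{k\in\sigma}x_k$, with $|\sigma\cap I_j|\leq 1$ for all $j$; these monomials are a $K$-basis of $R/J'$ and are exactly the faces of the complex $\Sigma$ from the introduction. Therefore $\reg(S/I_G)=\max\{|\sigma|:\sigma\in\Sigma\}$, and the theorem reduces to: \emph{$\Sigma$ has a face of cardinality $r$ if and only if $a_{i+2}>b_i$ for $1\leq i\leq r-2$}, which is the arithmetic form of $F_i\cap F_{i+1}\cap F_{i+2}=[a_{i+2},b_i]=\emptyset$.

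For the upper bound and sufficiency I would use a monotone injection. Assigning to each $k\in\sigma$ the index $\min\{j:k\in I_j\}=\min\{j:b_j\geq k\}$ gives, since $b_j$ is increasing, a non-decreasing map that is injective by the constraint $|\sigma\cap I_j|\leq 1$; hence $|\sigma|\leq r$. Assuming $a_{i+2}>b_i$ for $1\leq i\leq r-2$, I would exhibit the explicit face $\sigma=\{a_2,a_3,\ldots,a_r,n\}$, i.e.\ $c_j=a_{j+1}$ with the convention $a_{r+1}:=n$. One checks $c_j\in I_j$ from $a_{j+1}\leq b_j$, while $c_{j+1}=a_{j+2}>b_j$ shows $c_{j+1}\notin I_j$ and $c_j=a_{j+1}\leq a_{j+1}$ shows $c_j\notin I_{j+1}$; together with $b_1<\cdots<b_r$ this makes each $I_j$ meet $\sigma$ exactly once, so $|\sigma|=r$ and $\reg(S/I_G)=r$.

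For necessity, suppose $\sigma=\{c_1<\cdots<c_r\}\in\Sigma$. The monotone injection above is then forced to be the order isomorphism onto $\{1,\ldots,r\}$, so $c_\ell\in I_\ell$ for every $\ell$. Now $c_\ell\notin I_{\ell-1}$ forces $c_\ell>b_{\ell-1}$ and $c_\ell\notin I_{\ell+1}$ forces $c_\ell\leq a_{\ell+1}$, whence $b_{\ell-1}<a_{\ell+1}$; taking $i=\ell-1$ for $2\leq\ell\leq r-1$ gives exactly $a_{i+2}>b_i$ for $1\leq i\leq r-2$. I expect the necessity direction to be the main obstacle: the crux is showing that a size-$r$ face must be \emph{aligned} with the cliques, so that its $\ell$-th element lies in $I_\ell$, and this is precisely where the staircase shape of the cliques (monotonicity of $j\mapsto b_j$) does the work.
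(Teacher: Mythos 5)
Your proposal is correct and follows essentially the same route as the paper: reduce to the connected case, pass to $\ini_{\rev}(I_G)$ and its Artinian reduction modulo $x_1,x_{n+1}$, and read off $\reg(S/I_G)$ as the top degree of the $h$-vector, i.e.\ the maximal cardinality of a face of $\Sigma$. Your only deviations are cosmetic: you make the alignment $c_\ell\in I_\ell$ explicit via the monotone injection $k\mapsto\min\{j\mid b_j\geq k\}$ and deduce $b_{\ell-1}<a_{\ell+1}$ directly, where the paper simply asserts $j_k\in F_k\setminus F_{k-1}$ and argues by contradiction using $F_{i+1}\setminus F_i\subset F_{i+2}$, and you use the witness face $\{a_2,\dots,a_r,n\}$ in place of the paper's monomial $x_2x_{b_1+1}\cdots x_{b_{r-2}+1}x_n$.
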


\begin{proof} By the above discussion, we may reduce the proof of the statement to the connected case. Therefore, in the proof we assume that $G$ is connected.
We know that $H_{S/I_G}(t)=H_{S/\ini_{\rev}(I_G)}(t)=\frac{P(t)}{(1-t)^2}$ with $P(t)\in\mathbb{Z}[t]$. We have  $\ini_{\rev}(I_{G})=\ini_{\rev}(I_{F_{1}}) + \cdots + \ini_{\rev}(I_{F_{r}})=(x_{2},\ldots,x_{b_{1}})^2+ \cdots + (x_{a_{r}+1},\ldots,x_{n})^2$. As $x_{1},$ $x_{n+1}$ is a  regular sequence  on $S/\ini_{\rev}(I_{G})$, we get \[P(t)= H_{S/(\ini_{\rev}(I_{G}),x_{1},x_{n+1})}(t)=h_{0}+h_{1}t+\cdots+h_{r}t^r,\]
where $h_{i}=\dim (S/(\ini_{\rev}(I_{G}),x_{1},x_{n+1}))_{i}$ for $0\leq i \leq r$.

First suppose that $\reg(S/I_G)=r$, that is, $h_{r}\neq0$. This implies that there exists a monomial of degree $r$, say $w,$ which does not belong to $(\ini_{\rev}(I_{G}),x_{1},x_{n+1})$. Observe that any monomial that does not belong to $(\ini_{\rev}(I_{G}),x_{1},x_{n+1})$ is a square free monomial in the variables $x_{2},\ldots,x_{n}$. Let $w=x_{j_{1}},\ldots,x_{j_{r}}\notin \ini_{\rev}(I_{G})$ with $2\leq j_{1}<\cdots<j_{r}\leq n$. Then we must have $j_{1}\in F_{1}, j_{2}\in F_{2}\backslash F_{1},\ldots,j_{r}\in F_{r}\backslash F_{r-1}$. Assume there exists $i$ with $F_{i}\cap F_{i+1}\cap F_{i+2}\neq \emptyset$, where $F_{i}=[a_{i},b_{i}],F_{i+1}=[a_{i+1},b_{i+1}],$ and $F_{i+2}=[a_{i+2},b_{i+2}]$. Since $F_{i}\cap F_{i+1}\cap F_{i+2}\neq \emptyset$, we observe that $a_{i}<a_{i+1}<a_{i+2}<b_{i}<b_{i+1}<b_{i+2}$ which shows that $F_{i+1}\backslash F_{i} \subset F_{i+2}$. This implies that $j_{i+1},j_{i+2}\in F_{i+2}$, that is, $x_{j_{i+1}} x_{j_{i+2}}\in \ini_{\rev}(I_{G})$, which is a contradiction to the 
choice of $w$.

Conversely, suppose that $F_{i}\cap F_{i+1}\cap F_{i+2}=\emptyset$ for all $i$. In other words, we have $a_{i+1}\geq b_i+1$ for 
$1\leq i\leq r-2.$ Then, it is easily seen that the monomial $w=x_2 x_{b_1+1}\cdots x_{b_{r-2}+1}x_n$ has degree $r$ and it does not belong to $(\ini_{\rev}(I_{G}),x_{1},x_{n+1})$. Therefore, $\reg(S/ I_{G})=r$.
\end{proof}

\section{Gorenstein binomial edge ideals associated with scrolls}

In this section we characterize the closed graphs $G$ with the property that $I_G$ is a Gorenstein ideal, that is, $S/I_G$ is Gorenstein.

We first consider the case when $G$ is connected. We note that if $\Delta(G)$ consists of a single clique, that is, $G$ is the complete graph $K_n$, then $I_G$ 
has a linear resolution. Moreover, one may easily derive that $\beta_{n-1}(S/I_G)=n-1$, hence, unless $G=K_2$, $S/I_G$ is not Gorenstein. Therefore, in what 
follows we consider that $G$ has at least 2 cliques.

\begin{Lemma}\label{reg_is_r}
 Let G be a connected closed graph with $r\geq2$ maximal cliques $F_1,\dots,F_r$ with $F_i=[a_i,b_i]$ for $1\leq i\leq r$, $1=a_{1} < a_{2} < \cdots < a_{r} < b_{r}=n$. If $I_G$ is Gorenstein and reg$(S/I_G)=r$, then the following equalities hold:  \[a_2=2, a_{i+2}=b_i+1 {\rm ~for~} 1\leq i\leq r-2, b_{r-1}=n-1.\]
\end{Lemma}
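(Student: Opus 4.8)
The plan is to pass to the Artinian reduction, read off the top coefficient of the $h$-vector from Gorensteinness, and then interpret that coefficient combinatorially through the complex $\Sigma$. Since $S/I_G$ is Cohen--Macaulay of dimension $2$ and $x_1,x_{n+1}$ is a system of parameters (the quotient $S/(I_G,x_1,x_{n+1})$ has the same Hilbert function as the Artinian algebra $S/(\ini_{\rev}(I_G),x_1,x_{n+1})$, hence finite length), the sequence $x_1,x_{n+1}$ is a regular sequence on $S/I_G$. Thus $A:=S/(I_G,x_1,x_{n+1})$ is Artinian Gorenstein with Hilbert function $(h_0,\dots,h_r)$, and $h_r\neq 0$ because $\reg(S/I_G)=r$. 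For a graded Artinian Gorenstein algebra the $h$-vector is symmetric, so $h_r=h_0=1$. The whole lemma follows once $h_r$ is expressed combinatorially.

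First I would set up the combinatorial model. As in Theorem \ref{maxreg}, a monomial basis of $A$ is given by the squarefree monomials in $x_2,\dots,x_n$ avoiding $\ini_{\rev}(I_G)=\sum_{j=1}^r(x_{a_j+1},\dots,x_{b_j})^2$; equivalently, $h_k$ counts the faces $\sigma\subseteq\{2,\dots,n\}$ of $\Sigma$ with $|\sigma|=k$, where $\sigma\in\Sigma$ iff $|\sigma\cap I_j|\leq 1$ for every interval $I_j=[a_j+1,b_j]$. The maximal-regularity hypothesis, in the form $a_{i+2}\geq b_i+1$ for $1\leq i\leq r-2$, guarantees that two non-consecutive intervals $I_j$ and $I_{j+2}$ are disjoint; together with the connectivity inequalities $a_{j+1}\leq b_j$ (and the fact that $a_j\in I_{j-1}$ for $j\geq 2$) this shows that every vertex of $\{2,\dots,n\}$ lies in one or two intervals, and in the latter case the two intervals are consecutive.

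The crux is a product formula for $h_r$. For any face $\sigma$ with $|\sigma|=r$ I would count incidences: since $\sum_j|\sigma\cap I_j|=\sum_{v\in\sigma}\#\{j:v\in I_j\}$ is at most $r$ (at most one element per interval) and at least $r$ (each vertex lies in at least one interval), both bounds are equalities, forcing $|\sigma\cap I_j|=1$ for all $j$ and forcing every chosen vertex to lie in the ``private'' part of its interval, i.e.\ in no overlap. Computing these private parts gives $[2,a_2]$ for $I_1$, the interval $[b_{j-1}+1,a_{j+1}]$ for $2\leq j\leq r-1$, and $[b_{r-1}+1,n]$ for $I_r$; all are nonempty precisely by the above inequalities. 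Since a size-$r$ face is exactly a transversal choosing one private vertex per interval, I obtain
\[
h_r=(a_2-1)\cdot\prod_{i=1}^{r-2}(a_{i+2}-b_i)\cdot(n-b_{r-1}),
\]
a product of positive integers.

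Finally I would combine the two computations: the relation $h_r=1$ forces each factor of this product to equal $1$, which yields exactly $a_2=2$, $a_{i+2}=b_i+1$ for $1\leq i\leq r-2$, and $b_{r-1}=n-1$. The main obstacle is the third paragraph---proving that a top-dimensional face must be a transversal of the private parts and that the count factors as a product; the incidence (double-counting) argument together with the disjointness of non-consecutive intervals is what makes this precise. The passage from Gorensteinness to $h_r=1$ and the concluding factorization argument are then routine.
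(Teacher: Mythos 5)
Your proof is correct, and while it shares the paper's skeleton---pass to the Artinian reduction $S/(\ini_{\rev}(I_G),x_1,x_{n+1})$, use symmetry of the Gorenstein $h$-vector together with $\reg(S/I_G)=r$ to get $h_r=h_0=1$, and interpret $h_r$ as the number of squarefree monomials of degree $r$ in $x_2,\dots,x_n$ outside $\ini_{\rev}(I_G)$---the decisive step is genuinely different. The paper argues by contraposition with explicit witnesses: it exhibits $w=x_2x_{b_1+1}\cdots x_{b_{r-2}+1}x_n\notin\ini_{\rev}(I_G)$ and then, in each of the three failure cases ($a_2>2$, $b_{r-1}<n-1$, $a_{i+2}>b_i+1$ for some $i$), simply writes down a second surviving monomial of degree $r$, contradicting $h_r=1$. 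You instead prove a closed formula: using $F_i\cap F_{i+2}=\emptyset$ (from Theorem \ref{maxreg}) together with connectivity ($a_{j+1}\le b_j$, so the intervals $I_j=[a_j+1,b_j]$ cover $[2,n]$), your incidence double count shows a degree-$r$ face must pick exactly one vertex from the private part of each interval, whence $h_r=(a_2-1)\cdot\prod_{i=1}^{r-2}(a_{i+2}-b_i)\cdot(n-b_{r-1})$, and $h_r=1$ forces every factor to equal $1$. Your route is a bit heavier (the transversal structure needs the covering argument, the disjointness of non-consecutive intervals, and the easy converse that every transversal of the private parts is a face---this converse is what lets you conclude $h_r\ge 2$ when some factor exceeds $1$, so it should be stated, though it is immediate), but it buys more: it computes $h_r$ exactly under maximal regularity, and in particular yields at once the converse statement (the equalities imply $h_r=1$ and the uniqueness of $w$), a fact the paper must re-derive at the start of the $(b)\Rightarrow(a)$ direction of Theorem \ref{mainthm}. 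One small wording caveat: your parenthetical claim that $S/(I_G,x_1,x_{n+1})$ has ``the same Hilbert function'' as $S/(\ini_{\rev}(I_G),x_1,x_{n+1})$ is, as stated, essentially what regularity of the sequence delivers afterwards; but only finite length is needed at that point, and that follows from the inclusion $\ini_{\rev}(I_G)+(x_1,x_{n+1})\subseteq\ini_{\rev}(I_G+(x_1,x_{n+1}))$, so the argument stands, and equality of Hilbert functions is then recovered once $x_1,x_{n+1}$ is known to be regular on the Cohen--Macaulay ring $S/I_G$.
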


\begin{proof}
By Theorem \ref{maxreg}, we have $F_i\cap F_{i+2}=\emptyset$ for $1\leq i\leq r-2$;
 in other words, $b_i<a_{i+2}$ for $1\leq i\leq r-2$. 
 
 As before, let $P(t)=h_0+h_1t+\cdots+h_rt^r$ be the numerator of the Hilbert series $H_{S/I_G}(t)=H_{S/\ini_{\rev}(I_G)}(t)$. Recall from the proof of Theorem \ref{maxreg} that $P(t)=H_{S/(\ini_{\rev}(I_G),x_1,x_{n+1})}(t)$. 
 
Since $I_G$ is Gorenstein, the $h$-vector $(h_0,\dots,h_s)$ is symmetric. Therefore, since $h_0=1$, the leading coefficient $h_r$ of $P(t)$ must be equal to 1, as well. This means that the last non-zero component of $S/(\ini_{\rev}(I_G),x_1,x_{n+1})$ has dimension 1 as a vector space over $K$. 
 In other words, there must be exactly one 
 squarefree monomial of degree $r$ in the variables $x_2,\dots,x_n$ which does not belong to $\ini_{\rev}(I_G)$. One easily observes that the monomial 
 $w=x_2x_{b_1+1}x_{b_2+1}\cdots x_{b_{r-2}+1}x_n$ does not belong to $\ini_{\rev}(I_G)$. Now we will show that the above equalities must hold in order to not have another squarefree monomial $w^\prime$ of degree $r$ such that $w^\prime \notin\ini_{\rev}(I_G)$.
 
 Let us first assume that $a_2>2$. Then we find the monomial $x_{a_2}x_{b_1+1}x_{b_2+1}\cdots x_{b_{r-2}+1}x_n$ which does not belong to $\ini_{\rev}(I_G)$. 
 Similarly, if $b_{r-1}<n-1$, then we find the monomial $x_2x_{b_1+1}x_{b_2+1}\cdots x_{b_{r-2}+1}x_{n-1}\notin \ini_{\rev}(I_G)$. 
 Finally, if there exists $1\leq i\leq r-2$ such that $a_{i+2}>b_i+1$, then the monomial 
 $\frac{w}{x_{b_i+1}}x_{b_i+2}=x_2x_{b_1+1}\cdots x_{b_{i-1}+1}x_{b_{i}+2}x_{b_{i+1}+1}\cdots x_{b_{r-2}+1}x_n$ does not belong to $\ini_{\rev}(I_G)$. 
\end{proof}

We observe that $x_1,x_{n+1}$ is a regular sequence on $S/I_G$. Indeed, one easily sees that $x_1$ is regular on $S/\ini_{\rev}(I_G)$, and hence on $S/I_G$, and $(\ini_{\rev}(I_G),x_1)=\ini_{\rev}(I_G, x_1)$. As $x_{n+1}$ is regular on $S/\ini_{\rev}(I_G, x_1)$, it follows that $x_{n+1}$ is regular on $S/(I_G, x_1)$, as well. 

Therefore, the ring $S/I_G$ is Gorenstein if and only if $S/(I_G, x_1,x_{n+1})$ is Gorenstein. On the other hand $S/(I_G, x_1,x_{n+1})\cong \bar{S}/\bar{I}_G$, where $\bar{S}=K[x_2,\dots,x_n]$ and $\bar{I}_G=I_G$ mod$(x_1,x_{n+1})$. 
 
 We also observe that $\bar{S}/\bar{I}_G$ is a zero-dimensional ring. Thus, $\bar{S}/\bar{I}_G$ is Gorenstein if and only if the socle of $\bar{S}/\bar{I}_G$ has dimension 1 as a $K$-vector space \cite[Proposition 21.5]{Ei2}. Therefore, $S/I_G$ is Gorenstein if and only if dim$_K(\bar{I}_G : \mathfrak{m}/\bar{I}_G)=1$, where $\mathfrak{m}=(x_2,\dots,x_n)$.

The next theorem is the core of this section.

\begin{Theorem}\label{mainthm}
 Let G be a connected closed graph with $r\geq2$ maximal cliques $F_1,\dots,F_r$ with $F_i=[a_i,b_i]$ for $1\leq i\leq r$, $1=a_{1} < a_{2} < \cdots < a_{r} < b_{r}=n$. Then the following statements are 
 equivalent:
 \begin{itemize}
  \item[(a)] $I_G$ is a Gorenstein ideal;
  \item[(b)] The following equalities hold:  $a_2=2, a_{i+2}=b_i+1$ for $1\leq i\leq r-2$, $b_{r-1}=n-1$.
 \end{itemize}
\end{Theorem}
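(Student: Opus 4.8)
The previous lemma already gives the implication (a) $\Rightarrow$ (b) \emph{under the additional hypothesis} that $\reg(S/I_G)=r$, so the first order of business is to handle the general direction (a) $\Rightarrow$ (b) without assuming maximal regularity, and then prove the converse (b) $\Rightarrow$ (a). For the converse, I would work entirely in the quotient $\bar S/\bar I_G$ with $\bar S=K[x_2,\dots,x_n]$ and $\bar I_G=\ini_{\rev}(I_G)\bmod(x_1,x_{n+1})$, exploiting the reduction established just before the theorem: $S/I_G$ is Gorenstein iff the socle of the zero-dimensional ring $\bar S/\bar I_G$ is one-dimensional over $K$, i.e. $\dim_K(\bar I_G:\mm/\bar I_G)=1$. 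Here $\bar I_G$ is the squarefree-squared monomial ideal $(x_2,\dots,x_{b_1})^2+(x_{a_2+1},\dots,x_{b_2})^2+\cdots+(x_{a_r+1},\dots,x_n)^2$, and the $K$-basis of $\bar S/\bar I_G$ consists of the squarefree monomials in $x_2,\dots,x_n$ that pick at most one variable from each block of consecutive indices $I_j=[a_j+1,b_j]$ (the simplicial complex $\Sigma$ from the introduction). Thus the entire problem becomes a combinatorial computation of the socle of a monomial quotient.

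\textbf{Direction (b) $\Rightarrow$ (a).} Assuming the numerical conditions $a_2=2$, $a_{i+2}=b_i+1$, $b_{r-1}=n-1$, I would first record their structural consequence: the blocks $I_1,\dots,I_r$ cover $\{2,\dots,n\}$ and consecutive blocks $I_j,I_{j+1}$ overlap in exactly the single index $b_j=a_{j+2}-1$ (for the relevant range), while blocks at distance $\ge 2$ are disjoint. A monomial $x_\sigma$ is a nonzero socle element exactly when $x_\sigma\notin\bar I_G$ but $x_k x_\sigma\in\bar I_G$ for every $k\in\{2,\dots,n\}$; that is, $\sigma$ is a maximal face of $\Sigma$. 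The plan is to show that under (b) there is a \emph{unique} maximal face, forcing $\dim_K\Soc=1$. The natural candidate is $w=x_2x_{b_1+1}x_{b_2+1}\cdots x_{b_{r-2}+1}x_n$ from Lemma~\ref{reg_is_r}: it already has the maximal possible cardinality $r$. To prove uniqueness I would argue that any squarefree $x_\sigma\notin\bar I_G$ can be extended inside $\Sigma$ to $w$ — using the overlap structure to show that if $\sigma$ omits a variable from some block, or uses a ``wrong'' index, one can slide or insert a variable, contradicting maximality unless $\sigma$ already equals the support of $w$.

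\textbf{Direction (a) $\Rightarrow$ (b).} For the forward direction I must drop the hypothesis $\reg(S/I_G)=r$, which is precisely what makes this harder than Lemma~\ref{reg_is_r}. The strategy is contrapositive: assuming any one of the three equalities in (b) fails, I would exhibit \emph{at least two} linearly independent socle elements of $\bar S/\bar I_G$, i.e. two distinct maximal faces of $\Sigma$, so that $\dim_K\Soc\ge 2$ and $S/I_G$ is not Gorenstein. The symmetry of the Gorenstein $h$-vector should again be useful as a bookkeeping device, but the core is combinatorial: each violation $a_2>2$, $b_{r-1}<n-1$, or $a_{i+2}>b_i+1$ creates ``room'' that lets one build two different maximal faces of $\Sigma$. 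The main obstacle is exactly here: unlike Lemma~\ref{reg_is_r}, I cannot assume the faces have the uniform length $r$, so the socle elements may have varying cardinalities and the case analysis on which equality fails must carefully track how a gap in the block structure spawns a second maximal face. Producing genuinely distinct \emph{maximal} (not merely nonzero) monomials, and verifying their maximality against every extension direction, is where the delicate combinatorial argument will concentrate.
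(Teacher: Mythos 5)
There is a genuine gap, and it is exactly the one the paper's introduction warns about: you have conflated the binomial quotient with its monomial degeneration. The reduction established before the theorem says that $S/I_G$ is Gorenstein iff $\dim_K(\bar I_G:\mm/\bar I_G)=1$ where $\bar I_G=I_G \bmod (x_1,x_{n+1})$ is the reduced \emph{binomial} ideal; your proposal instead sets $\bar I_G=\ini_{\rev}(I_G)\bmod(x_1,x_{n+1})$ and declares the problem to be ``a combinatorial computation of the socle of a monomial quotient,'' with socle elements identified with maximal faces of $\Sigma$. That equivalence is false: passing to the initial ideal can strictly enlarge the socle, and here it does. Take $r=2$, $n=4$, $F_1=[1,3]$, $F_2=[2,4]$, which satisfies (b). The monomial quotient $K[x_2,x_3,x_4]/\bigl((x_2,x_3)^2+(x_3,x_4)^2\bigr)$ has socle spanned by $x_3$ and $x_2x_4$, i.e.\ $\Sigma$ has \emph{two} maximal faces, $\{3\}$ and $\{2,4\}$; yet in the binomial quotient $x_3^2\equiv x_2x_4\not\equiv 0 \pmod{\bar I_G}$, so $x_3$ is not a socle element there, the true socle is one-dimensional, and $I_G$ is Gorenstein (this is the paper's two-clique example). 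Consequently both halves of your plan break as stated: for (b)$\Rightarrow$(a), the unique-maximal-face claim for $\Sigma$ is simply false under (b) (maximal faces of cardinality $<r$ abound), so uniqueness of the facet of $\Sigma$ cannot certify Gorensteinness; for (a)$\Rightarrow$(b), exhibiting two distinct maximal faces of $\Sigma$ proves nothing, since the Gorenstein example above already has two. Your monomial framework can only ever produce monomial socle candidates, and that is structurally inadequate here.

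The paper's actual proof is forced to work modulo the binomial ideal throughout, and its socle witnesses are genuinely non-monomial. For (a)$\Rightarrow$(b) it uses the symmetry of the $h$-vector only to conclude $h_s=1$ for $s=\reg(S/I_G)\le r$, extracts structural conditions on the unique top-degree standard monomial $w$, and then, assuming $s<r$, constructs the \emph{binomial} $f=(x_3x_{j_3-1}-x_2x_{j_3})x_{j_4}\cdots x_{j_{s-1}}x_n$ of degree $s-1$, verifying through a six-case congruence computation that $x_lf\in\bar I_G$ for all $l$ while $\ini_{\rev}(f)\notin\ini_{\rev}(\bar I_G)$; this second socle element forces $s=r$, and only then does Lemma~\ref{reg_is_r} apply. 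For (b)$\Rightarrow$(a) it shows that any \emph{homogeneous polynomial} $f\in\bar I_G:\mm$ of degree $\le r-1$ (written in standard monomials) lies in $\bar I_G$, by an inductive procedure that multiplies $f$ by carefully chosen variables and slides indices along congruences such as $x_2x_{b_1+1}\equiv x_3x_{b_1}\equiv x_4x_{b_1-1}$ to expose a standard leading monomial of $x_kf$. You correctly identified that (a)$\Rightarrow$(b) cannot assume maximal regularity — that much of the difficulty you saw — but the tool you chose to overcome it cannot work; any repair must account for socle elements that are nontrivial linear combinations of standard monomials.
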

\begin{proof}
 $(a)\Rightarrow(b).$ Suppose that $I_G$ is a Gorenstein ideal. By the above observations, we have dim$_K(\bar{I}_G : \mathfrak{m}/\bar{I}_G)=1$, where $\bar{S}=K[x_2,\dots,x_n]$, $\bar{I}_G=I_G$ mod$(x_1,x_{n+1})$, and $\mathfrak{m}=(x_2,\dots,x_n)$.
 
 We easily see that the reduced Gr\"{o}bner basis of $\bar{I}_G$ with respect to the reverse lexicographic order is obtained from the reduced Gr\"{o}bner basis of $I_G$ by moding out $x_1$ and $x_{n+1}$. Therefore, 
 \[\ini_{\rev}(\bar{I}_G)=(x_2,\dots,x_{b_1})^2+(x_{a_2+1},\dots,x_{b_2})^2+\cdots+(x_{a_r+1},\dots,x_n)^2.\] Since $S/I_G$ is Gorenstein, recall from the proof of Lemma \ref{reg_is_r} that there must be exactly one squarefree monomial in the maximal degree $s=\reg(S/I_G)=\reg(\bar{S}/\bar{I}_G)\leq r$ in the variables $x_2,\dots,x_n$ which does not belong to $\ini_{\rev}(\bar{I}_G)$. In other words, $h_s=1$, where $h_s$ is the last component of the $h$-vector of $\bar{S}/\ini_{\rev}(\bar{I}_G)$, which coincides with the $h$-vector of $S/\ini_{\rev}(I_G)$.
 
 We will show that if $h_s=1$ and $s<r$, then dim$_K(\bar{I}_G : \mathfrak{m}/\bar{I}_G)>1$. Once we prove this, since $\bar{S}/\bar{I}_G$ is Gorenstein, it follows that we must have $s=r$ and the proof of $(a)\Rightarrow(b)$ is completed by Lemma \ref{reg_is_r}.
 
 So, suppose that $h_s=1$ and $s<r$. Let $w=x_{j_1}x_{j_2}\cdots x_{j_s}$ be the unique squarefree monomial of degree $s$ with $w\notin \ini_{\rev}(\bar{I}_G)$.
 
 The uniqueness of $w$ immediately implies that $j_1=2$ and $j_s=n$. Indeed, if for example $j_1>2$, then $w'=x_2x_{j_2}\cdots x_{j_s}\notin \ini_{\rev}(\bar{I}_G)$ and $w'\neq w$, contradiction. Thus, $w=x_2x_{j_2}\cdots x_{j_{s-1}}x_n$. Moreover, we must have $j_2\geq b_1+1$ and, by the uniqueness of $w$, we get $j_2= b_1+1$.
 
 Again by the uniqueness of $w$, it follows that $a_2=2$ and $b_{r-1}=n-1$. Indeed, if, for example $a_2>2$, then $w'=x_3x_{j_2}\cdots x_{j_{s-1}}x_n\notin \ini_{\rev}(\bar{I}_G)$ and $w'\neq w$, which is again a contradiction.
 
  On the other hand, we observe that $w$ must "cover" every set $A_i=\{x_{a_{i}+1},x_{a_{i}+2},\dots,x_{b_i}\}$ for $1\leq i\leq r$ in the sense that for each $i$ with $1\leq i\leq r$ there exists a variable $x_j\in A_i$ such that $x_j\mid w$. Indeed, let us assume that there exists $i$ with $1\leq i\leq r$ such that for all $x_j\in A_i$, $x_j\nmid w$. Then we may find an integer $q$ such that $j_q<a_i+1\leq b_i<j_{q+1}$. It follows that $w'=\frac{w}{x_{j_q}}x_{a_{i}+1}\notin\ini_{\rev}(\bar{I}_G)$ and $\deg(w')=\deg(w)$, a contradiction to the uniqueness of $w$.
  
  In addition, the uniqueness of $w$ implies that for all $i$ with $1\leq i\leq r-2$ we must have $a_{i+2}\leq b_i+1$. Indeed, let us assume that there exists an integer $k$ such that $a_{k+2}> b_k+1$ and set $i=\min\{k\mid a_{k+2}> b_k+1\}$. There exists a unique $q$ such that $j_q\leq b_i$ and $j_{q+1}>b_i$. If $j_{q+1}= b_i+1$, then $w'=\frac{w}{x_{j_{q+1}}}x_{b_i+2}\notin \ini_{\rev}(\bar{I}_G)$ and $\deg(w')=\deg(w)$. If $j_{q+1}>b_i+1$, then $w'=\frac{w}{x_{j_{q+1}}}x_{b_i+1}\notin \ini_{\rev}(\bar{I}_G)$ and $\deg(w')=\deg(w)$.
  
  Consequently, in what follows we assume that $w$ covers every set $A_i$, $i=1,\dots,r$, and $a_{i+2}\leq b_i+1$, for $1\leq i\leq r-2$.
 
 For $1\leq k\leq s$, we set $i(k)= {\rm min}\{i\mid j_k\in A_i\}$, that is, $A_{i(k)}$ is the first set $A_i$ containing $j_k$ and $a_{i(k)}, b_{i(k)}$ are the endpoints of the clique $F_{i(k)}$. For example, as $j_1=2$, we have $i(1)=1$ and, since $j_2=b_1+1$, we have $i(2)=2$.
 
 Since $w$ covers each set $A_i$ and by its uniqueness, we derive that for $3\leq k\leq s$ the following conditions must be fulfilled: 
\[j_k= {\rm max}\{b_i\mid x_{j_{k-1}}\in A_i\}+1~{\rm and}~a_{i(k)}=j_{k-1}=b_{i(k-1)-1}+1.\]
 
 In particular, $a_r=b_{i(s-1)-1}+1.$
 
 Note that $\bar{I}_G : \mathfrak{m}/\bar{I}_G$ contains in its $K$-basis the monomial $w$ of degree $s=$ reg$(\bar{S}/\bar{I}_G)< r$. We will show that under the above conditions, which are imposed by the uniqueness of $w$, we may find another polynomial $f$ of degree $s-1$ in the $K$-basis of $\bar{I}_G : \mathfrak{m}/\bar{I}_G$, hence obtaining dim$_K(\bar{I}_G : \mathfrak{m}/\bar{I}_G)>1$, as claimed.
 
 Recall that $w=x_2x_{j_2}\cdots x_{j_{s-1}}x_n$ with $j_{k}=b_{i(k)-1}+1$ for $2\leq k\leq s-1$ and $a_{i(k)}=b_{i(k-1)-1}+1$ for $2\leq k\leq s$. 
 
 Note that, since $s=\reg(\bar{S}/\bar{I}_G)=\reg(S/I_G)<r$, there must exist three consecutive cliques with nonempty intersection. We may assume without loss of generality that the intersection of the first 3 cliques is nonempty, that is, $i(3)>3.$
 
 We consider the following binomial of degree $s-1$: \[f=(x_3x_{j_3-1}-x_2x_{j_3})x_{j_4}\cdots x_{j_{s-1}}x_n.\]

Obviously, $\ini_{\rev}(f)=x_3x_{j_3-1}x_{j_4}\cdots x_{j_{s-1}}x_n$. We claim that $\ini_{\rev}(f)\notin \ini_{\rev}(\bar{I}_G)$. Indeed, if $\ini_{\rev}(f)\in \ini_{\rev}(\bar{I}_G)$, then we should have either $x_3x_{j_3-1}\in \ini_{\rev}(\bar{I}_G)$ or $x_{j_3-1}x_{j_4}\in \ini_{\rev}(\bar{I}_G)$. But, as $i(3)>3$, $j_3-1=b_{i(3)-1}>b_2$, thus $x_3x_{j_3-1}\notin \ini_{\rev}(\bar{I}_G)$. If $x_{j_3-1}x_{j_4}\in \ini_{\rev}(\bar{I}_G)$, then $x_{j_3}x_{j_4}\in \ini_{\rev}(\bar{I}_G)$, impossible. Thus, $\ini_{\rev}(f)\notin \ini_{\rev}(\bar{I}_G)$, which implies that $f\notin\bar{I}_G$. 

In order to obtain dim$_K(\bar{I}_G : \mathfrak{m}/\bar{I}_G)>1$, it remains to show that $x_lf\in\bar{I}_G$ for $2\leq l\leq n$. This will complete the proof of the theorem. (See Example \ref{atob} for an illustration of the following technical procedure.)
 
 \underline{Case 1:} If $2\leq l\leq b_1-1$, then $x_3x_l\equiv x_2x_{l+1}\in \bar{I}_G$ and $x_2x_l\in \bar{I}_G$, thus $x_lf\in\bar{I}_G$.
 
 \underline{Case 2:} If $l=b_1$, then \[x_3x_{b_1}x_{j_3-1}\equiv x_2x_{b_1+1}x_{j_3-1}\equiv x_2x_{b_1}x_{j_3}\in \bar{I}_G\] and $x_2x_{b_1}\in \bar{I}_G$, thus $x_{b_1}f\in\bar{I}_G$.
  
 \underline{Case 3:} If $l=b_1+1$, then \[x_3x_{b_1+1}x_{j_3-1}\equiv x_3x_{b_1}x_{j_3}\equiv x_2x_{b_1+1}x_{j_3},\] the first congruence holding because $b_1\in F_{i(3)-1}$. Therefore, $x_{b_1+1}(x_3x_{j_3-1}-x_2x_{j_3})\in\bar{I}_G$, and thus $x_{b_1+1}f\in\bar{I}_G$.
 
 \underline{Case 4:} If $l=b_1+2$, then \[x_3x_{b_1+2}x_{j_3-1}\equiv x_3x_{b_1+1}x_{j_3}\equiv x_2x_{b_1+2}x_{j_3}.\] As in the previous case it follows that $x_{b_1+2}(x_3x_{j_3-1}-x_2x_{j_3})\in\bar{I}_G$, and thus $x_{b_1+2}f\in\bar{I}_G$.
 
 \underline{Case 5:} If $b_1+3\leq l\leq j_3$, since $a_{i(3)}=b_1+1$, we get:
 \begin{align*}
        &x_3x_lx_{j_3-1}x_{j_4}\cdots x_{j_{s-1}}x_n\equiv\\
 &\equiv x_3x_{l-1}x_{j_3}x_{j_4}\cdots x_{j_{s-1}}x_n\equiv\\
 &\equiv x_3x_{l-2}x_{j_3+1}x_{j_4}\cdots x_{j_{s-1}}x_n\equiv\\
 &\equiv x_3x_{l-2}x_{j_3}x_{j_4+1}\cdots x_{j_{s-1}}x_n\equiv\\
 &~~~~~~\vdots\\
 &\equiv x_3x_{l-2}x_{j_3}x_{j_4}\cdots x_{j_{s-1}+1}x_n\equiv\\
 &\equiv0~({\rm mod} ~\bar{I}_G),
 \end{align*}
 the last congruency holding because $x_{j_{s-1}+1}x_n\in\bar{I}_G$. 
 
 One shows similarily that $x_2x_lx_{j_3}x_{j_4}\cdots x_{j_{s-1}}x_n\in\bar{I}_G$. Thus $x_lf\in\bar{I}_G$.
  
 \underline{Case 6:} If $j_{k-1} < l \leq j_k$, where $4\leq k\leq s$, then
  \begin{align*}
  x_lf&\equiv (x_3x_{j_3-1}-x_2x_{j_3})x_{j_4}\cdots x_lx_{j_k}x_{j_{k+1}}\cdots x_{j_{s-1}}x_n\equiv\\
      &\equiv (x_3x_{j_3-1}-x_2x_{j_3})x_{j_4}\cdots x_{l-1}x_{j_k+1}x_{j_{k+1}}\cdots x_{j_{s-1}}x_n\equiv\\
      &\equiv (x_3x_{j_3-1}-x_2x_{j_3})x_{j_4}\cdots x_{l-1}x_{j_k}x_{j_{k+1}+1}\cdots x_{j_{s-1}}x_n\equiv\\
      &~~~~~~\vdots\\
      &\equiv (x_3x_{j_3-1}-x_2x_{j_3})x_{j_4}\cdots x_{j_{s-2}}x_{j_{s-1}+1}x_n\equiv\\
      &\equiv0~({\rm mod} ~\bar{I}_G).
 \end{align*}
 $(b)\Rightarrow(a).$ Let us assume that $F_1,\dots,F_r$ satisfy the numerical conditions of (b). We will show that dim$_K(\bar{I}_G : \mathfrak{m}/\bar{I}_G)=1$.
 
 The hypothesis on $G$ ensures the existence of a unique monomial $w$ of degree $r$ such that $w\notin\bar{I}_G$ and $\mathfrak{m}w\subseteq\bar{I}_G$, namely, $w=x_2x_{b_1+1}x_{b_2+1}\cdots x_{b_{r-2}+1}x_n$. Therefore, it remains to prove the following claim: 
 \begin{center}
  if $f\in \bar{I}_G : \mathfrak{m}$ and deg$(f)\leq r-1$, then $f\in \bar{I}_G$.
 \end{center}
 
 We prove this claim by contradiction. Let us assume that there exists a homogeneous polynomial $f\in \bar{I}_G : \mathfrak{m}$ with deg$(f)\leq r-1$ such that 
 $f\notin \bar{I}_G$. 
 
Let $f=c_1u_1+c_2u_2+\cdots+c_mu_m$, where $u_1>_{\rev} u_2>_{\rev}\cdots>_{\rev} u_m$ and $c_i\in K\backslash\{0\}$ for $1\leq i\leq m$. By reducing $f$ modulo $\bar{I}_G$, we may assume that no monomial in the support of $f$ belongs to $\ini_{\rev}(\bar{I}_G)$, and hence, each $u_i$ is a squarefree monomial in the variables $x_2,x_3,\dots,x_n$.

In order to reach the contradiction, we need to find a variable $x_k$, with $2\leq k\leq n$, such that $x_kf$ is congruent modulo $\bar{I}_G$ to some polynomial $g$ which does not belong to $\bar{I}_G$. We use an inductive procedure to find the appropriate variable $x_k$. (See Example \ref{btoa} for an illustration of this procedure.)

 \underline{Step 1:} Let us first assume that there exists some monomial $u_q\in\Supp(f)$ such that $u_q$ does not cover the set $A_1=\{x_2,x_3,\dots,x_{b_1}\}$ and let $i=\min\{q\mid u_q \text{ does not cover } A_1\}$. Hence, for all $x_l\in A_1$, $x_l\nmid u_i$, and $i$ is the least index with this property. We have \[x_2f=c_1x_2u_1+\cdots+c_{i-1}x_2u_{i-1}+c_{i}x_2u_{i}+\cdots c_mx_2u_m.\] Since for every $j\leq i-1$ there exists $x_l\in A_1$ such that $x_l\mid u_j$, it follows that $x_2u_j\in\bar{I}_G$, because $x_2x_l\in\bar{I}_G$. Hence, $x_2f\equiv g=c_{i}x_2u_{i}+\cdots c_mx_2u_m$ (mod $\bar{I}_G$). Now, since $x_2u_i=\ini_{\rev}(g)\notin\ini_{\rev}(\bar{I}_G)$, it follows that $g\notin\bar{I}_G$, thus $x_2f\notin\bar{I}_G$. 
 
 Hence, in what follows, we assume that every $u_i\in\Supp(f)$ covers $A_1$.

 \underline{Step 2:} Let us first assume that there exists $u_i\in\Supp(f)$ such that $x_l\mid u_i$ for some $l$ with $3\leq l\leq b_1$. 
 
 First, we suppose that we have in the support of $f$ the monomials $u_{q_1}>_{\rev}\cdots>_{\rev} u_{q_s}$ which are divisible by $x_{b_1}$. Then 
 $x_3f\equiv c_{q_1}x_3u_{q_1}+\cdots+c_{q_s}x_3u_{q_s}$, since $x_3x_l\in\bar{I}_G$, for $2\leq l\leq b_1-1$, that is
 \[x_3f\equiv x_3x_{b_1}\bigg(c_{q_1}\frac{u_{q_1}}{x_{b_1}}+\cdots+c_{q_s}\frac{u_{q_s}}{x_{b_1}}\bigg)\equiv g= x_2x_{b_1+1}\bigg(c_{q_1}\frac{u_{q_1}}{x_{b_1}}+\cdots+c_{q_s}\frac{u_{q_s}}{x_{b_1}}\bigg).\] 
 Since $\ini_{\rev}(g)=x_2x_{b_1+1}\frac{u_{q_1}}{x_{b_1}}$ does not belong to $\ini_{\rev}(\bar{I}_G)$, we have $g\notin\bar{I}_G$, and thus $x_3f\notin\bar{I}_G$.
 
 Next, we suppose that $\max\{l\in A_1\mid x_l \text{ divides } u_i \text{ for some } 1\leq i\leq m\}=b_1-1$. If $u_{q_1}>_{\rev}\cdots>_{\rev} u_{q_s}$ are the monomials of the support of $f$ which are divisible by $x_{b_1-1}$, then we consider $x_4f$ and get 
 \[x_4f\equiv x_4x_{b_1-1}\bigg(c_{q_1}\frac{u_{q_1}}{x_{b_1-1}}+\cdots+c_{q_s}\frac{u_{q_s}}{x_{b_1-1}}\bigg) ~({\rm mod} ~\bar{I}_G).\]
 But $x_4x_{b_1-1}\equiv x_3x_{b_1}\equiv x_2x_{b_1+1} ~({\rm mod} ~\bar{I}_G)$, thus 
 \[x_4f\equiv g= x_2x_{b_1+1}\bigg(c_{q_1}\frac{u_{q_1}}{x_{b_1-1}}+\cdots+c_{q_s}\frac{u_{q_s}}{x_{b_1-1}}\bigg) ~({\rm mod} ~\bar{I}_G).\]
 As $\ini_{\rev}(g)=x_2x_{b_1+1}\frac{u_{q_1}}{x_{b_1-1}}\notin\ini_{\rev}(\bar{I}_G)$, we have $g\notin\bar{I}_G$, and thus $x_4f\notin\bar{I}_G$. Contradiction.
 
 By repeating this procedure for $\max\{l\in A_1\mid x_l \text{ divides } u_i \text{ for some } 1\leq i\leq m\}=b_1-2$, $b_1-3,\dots,4,3$, and by using the congruences $x_2x_{b_1+1}\equiv x_3x_{b_1}\equiv x_4x_{b_1-1}\equiv x_5x_{b_1-2}\equiv\cdots$, we may find, in each case, a suitable variable $x_k$ such that $x_kf\notin \bar{I}_G$. 
 
 Therefore, we conclude that $x_2\mid u_i$ for all $u_i\in\Supp(f)$.

\underline{Step 3:} By induction on $j$, we may assume that $x_2x_{b_1+1}\cdots x_{b_{j-2}+1}\mid u_i$ for all $u_i\in\Supp(f)$.

Let us now first consider the case when there exists some monomial $u_q\in\Supp(f)$ which does not cover $A_j=\{x_{a_j+1},\dots,x_{b_j}\}$. Let $i=\min\{q\mid u_q \text{ does not cover } A_j\}$. We will show that $x_{b_{j-1}+1}f\notin \bar{I}_G$.

For $k\leq i-1$, $u_k$ is of the form $x_2x_{b_1+1}\cdots x_{b_{j-2}+1}x_{l}v$ for some monomial $v$ and some variable $x_l\in A_j$, with $l\geq b_{j-1}+1$.

We have
\begin{align*}
x_{b_{j-1}+1}u_k&=x_2x_{b_1+1}\cdots x_{b_{j-2}+1}x_{b_{j-1}+1}x_{l}v\equiv\\
              &\equiv x_2x_{b_1+1}\cdots x_{b_{j-2}+1}x_{b_{j-1}}x_{l+1}v\equiv\\
              &\equiv x_2x_{b_1+1}\cdots x_{b_{j-2}}x_{b_{j-1}+1}x_{l+1}v\equiv\\
              &\vdots\\
              &\equiv x_2x_{b_1}x_{b_2+1}\cdots x_{b_{j-1}+1}x_{l+1}v\equiv\\
              &\equiv0~(\mod~ \bar{I}_G),
\end{align*}
the last congruence holding because $x_2x_{b_1}\in\bar{I}_G$. 

It follows that \[x_{b_{j-1}+1}f\equiv g=c_ix_{b_{j-1}+1}u_i+\cdots+c_mx_{b_{j-1}+1}u_m.\] 

By our assumption on $u_i$, we have that $\ini_{\rev}(g)=c_ix_{b_{j-1}+1}u_i\notin\ini_{\rev}(\bar{I}_G)$, thus $g\notin\bar{I}_G$ and $x_{b_{j-1}+1}f\notin\bar{I}_G$. Contradiction.

Finally, we consider the case when each $u_q\in\Supp(f)$ covers $A_j=\{x_{a_j+1},\dots,x_{b_j}\}$. 

In this case, either $x_{b_{j-1}+1}$ divides each $u_q\in\Supp(f)$, which takes us back to the beginning of Step 3 with $j+1$ instead of $j$ (a procedure which has to terminate), or there is a monomial in the support of $f$ which is divisible by some variable $x_l$, where $b_{j-1}+2\leq l\leq b_j$.

In the latter case we proceed as in Step 2, by considering \[\max\{l\in A_j\cap A_{j+1}\mid x_l \text{ divides some } u_i \in\Supp(f)\}.\]

We illustrate the procedure when the above maximum is equal to $b_j$. Let $u_{q_1}>_{\rev}\cdots>_{\rev} u_{q_s}$ be the monomials in the support of $f$ which are divisible by $x_{b_j}$. The other monomials in the support of $f$ (if any) must be divisible by \[x_2x_{b_1+1}\cdots x_{b_{j-2}+1}x_{b_{j-1}+1}.\]

We will show that $x_{b_{j-1}+2}f\notin\bar{I}_G$. First we observe that for $1\leq i\leq s$, \[x_{b_{j-1}+2}u_{q_i}=x_{b_{j-1}+2}x_{b_j}\frac{u_{q_i}}{x_{b_j}}\equiv x_{b_{j-1}+1}x_{b_j+1}\frac{u_{q_i}}{x_{b_j}} ~(\mod~ \bar{I}_G).\]
  Note that the latter monomial is not in $\ini_{\rev}(\bar{I}_G)$. If we show that $x_{b_{j-1}+2}u\equiv0~(\mod~ \bar{I}_G)$ for any other monomial $u\in\Supp(f)$, it will follow that \[x_{b_{j-1}+2}f\equiv g= \sum_{i=1}^sx_{b_{j-1}+1}x_{b_j+1}\bigg(c_i\frac{u_{q_i}}{x_{b_j}}\bigg)~(\mod~ \bar{I}_G),\]
and, since $\ini_{\rev}(g)\notin\ini_{\rev}(\bar{I}_G)$, we have $g\notin\bar{I}_G$, and thus $x_{b_{j-1}+2}f\notin\bar{I}_G$.

Let $u$ be of the form $x_2x_{b_1+1}\cdots x_{b_{j-2}+1}x_{l}v$ for some monomial $v$ and some variable $x_l$ with $b_{j-1}+2\leq l< b_j$. Then
\begin{align*}
x_{b_{j-1}+2}u&=x_2x_{b_1+1}\cdots x_{b_{j-2}+1}x_{b_{j-1}+2}x_{l}v\equiv\\
              &\equiv x_2x_{b_1+1}\cdots x_{b_{j-2}+1}x_{b_{j-1}+1}x_{l+1}v\equiv\\
              &\equiv x_2x_{b_1+1}\cdots x_{b_{j-2}+1}x_{b_{j-1}}x_{l+2}v\equiv\\
              &\equiv x_2x_{b_1+1}\cdots x_{b_{j-2}}x_{b_{j-1}+1}x_{l+2}v\equiv\\
              &\vdots\\
              &\equiv x_2x_{b_1}x_{b_2+1}\cdots x_{b_{j-1}+1}x_{l+2}v\equiv\\
              &\equiv0~(\mod~ \bar{I}_G),
\end{align*}
This completes the proof of the theorem.
\end{proof}

We now give two examples which illustrate the technical procedure of the first part of the above proof. The first example illustrates the case which is considered in the proof, that is, when the intersection of the first 3 cliques in nonempty, whereas the second example illustrates the case when 3 consecutive cliques with nonempty intersection occur later, not at the beginning. 

From these 2 examples it is clear how the polynomial $f$ in the first part of the proof is picked in general.

\begin{Example}\label{atob}{\rm
Consider the graph $G$ with the cliques $F_1=[1,5], F_2=[2,6], F_3=[3,8], F_4=[4,9], F_5=[6,10], F_6=[7,12], F_7=[8,13], F_8=[10,14]$. Using the notation of the proof of Theorem \ref{mainthm}, we observe that $x_{j_1}x_{j_2}x_{j_3}x_{j_4}=x_2x_6x_{10}x_{14}$ is the only monomial of degree 4 in $\bar{I}_G : \mathfrak{m}/\bar{I}_G$. We also note that $b_{i(3)}=10$, and hence $b_{i(3)-1}=9$. Therefore, we consider
\[f=(x_3x_9-x_2x_{10})x_{14}.\]
We see that $\ini_{\rev}(f)=x_3x_{9}x_{14}$ does not belong to $\ini_{\rev}(\bar{I}_G)$. Hence, indeed, $f\notin\bar{I}_G$. In order to show that $f\in\bar{I}_G : \mathfrak{m}/\bar{I}_G$, hence obtaining dim$_K(\bar{I}_G : \mathfrak{m}/\bar{I}_G)>1$, it remains to show that $x_lf\in\bar{I}_G$ for $2\leq l\leq 14$. 

 \underline{Case 1:} If $2\leq l\leq b_1-1=4$, then $x_3x_l\equiv x_2x_{l+1}\in \bar{I}_G$ and $x_2x_l\in \bar{I}_G$, thus $x_lf\in\bar{I}_G$.
 
 \underline{Case 2:} If $l=b_1=5$, then $x_3x_{5}x_{9}\equiv x_2x_{6}x_{9}\equiv x_2x_{5}x_{10}\in \bar{I}_G$ and $x_2x_{5}\in \bar{I}_G$, thus $x_{5}f\in\bar{I}_G$.
  
 \underline{Case 3:} If $l=b_1+1=6$, then we have $x_3x_{6}x_{9}\equiv x_3x_{5}x_{10}\equiv x_2x_{6}x_{10}$. It follows that $x_{6}(x_3x_{9}-x_2x_{10})\in\bar{I}_G$, and thus $x_{6}f\in\bar{I}_G$.
 
 \underline{Case 4:} If $l=b_1+2=7$, then $x_3x_{7}x_{9}\equiv x_3x_{6}x_{10}\equiv x_2x_{7}x_{10}$. As in the previous case it follows that $x_{7}f\in\bar{I}_G$.
 
 \underline{Case 5:} If $8=b_1+3\leq l\leq j_3=10$, say $l=8$, we have \[x_3x_lx_{9}x_{14}=x_3x_8x_{9}x_{14}\equiv x_3x_{7}x_{10}x_{14}\equiv x_3x_{6}x_{11}x_{14}\equiv0~({\rm mod} ~\bar{I}_G).\]
 Similarily, \[x_2x_lx_{10}x_{14}=x_2x_8x_{10}x_{14}\equiv x_2x_{7}x_{11}x_{14}\equiv 0~({\rm mod} ~\bar{I}_G).\] Thus $x_8f\in\bar{I}_G$.
  
 \underline{Case 6:} If $10=j_3 < l \leq j_4=14$, then $x_lf=(x_3x_9-x_2x_{10})x_lx_{14}\equiv0~({\rm mod} ~\bar{I}_G)$, because $x_lx_{14}\in\bar{I}_G$.
}
\end{Example}

\begin{Example}\label{atob2}{\rm
Consider the graph $G$ with the cliques $F_1=[1,4], F_2=[2,5], F_3=[5,9], F_4=[6,10], F_5=[7,12], F_6=[8,13], F_7=[10,14], F_8=[14,15]$. We observe that $x_2x_5x_{6}x_{10}x_{14}x_{15}$ is the only monomial of degree 5 in $\bar{I}_G : \mathfrak{m}/\bar{I}_G$. We consider
\[f=x_2x_5(x_7x_{13}-x_6x_{14})x_{15}.\]
We see that $\ini_{\rev}(f)=x_2x_5x_{7}x_{13}x_{15}$ does not belong to $\ini_{\rev}(\bar{I}_G)$. Hence, $f\notin\bar{I}_G$. In order to show that $f\in\bar{I}_G : \mathfrak{m}/\bar{I}_G$, hence obtaining dim$_K(\bar{I}_G : \mathfrak{m}/\bar{I}_G)>1$, it remains to show that $x_lf\in\bar{I}_G$ for $2\leq l\leq 15$. 

\underline{Case 1:} If $2=j_1\leq l\leq j_2=5$, then $l=5$ is the nontrivial case. We have \[x_5f=x_2x_5^2(x_7x_{13}-x_6x_{14})x_{15}\equiv x_2x_4x_6(x_7x_{13}-x_6x_{14})x_{15}\in \bar{I}_G.\]

\underline{Case 2:} If $6=j_3\leq l\leq b_{i(3)}-1=8$, say $l=8$, then \[x_8x_2x_5x_7x_{13}x_{15}\equiv x_2x_5x_6x_9x_{13}x_{15}\equiv x_2x_5^2x_{10}x_{13x_{15}}\equiv x_2x_4x_6x_{10}x_{13}x_{15}\in \bar{I}_G,\]
and \[x_8x_2x_5x_6x_{14}x_{15}\equiv x_2x_5^2x_9x_{14}x_{15}\equiv x_2x_4x_6x_{9}x_{14}x_{15}\in \bar{I}_G,\]
hence, $x_8f\in\bar{I}_G$. By the same argument, we get $x_6f\in\bar{I}_G$ and $x_7f\in\bar{I}_G$.

\underline{Case 3:} If $9=b_{i(3)}\leq l\leq b_{i(3)}+2=11$, say $l=9$, then \[x_7x_9x_{13}\equiv x_7x_8x_{14}\equiv x_6x_9x_{14}.\]
Therefore, $x_9(x_7x_{13}-x_6x_{14})\in \bar{I}_G$, and hence, $x_9f\in \bar{I}_G$. By the same argument, we get $x_{10}f\in\bar{I}_G$ and $x_{11}f\in\bar{I}_G$.

\underline{Case 4:} If $12=b_{i(3)}+3\leq l\leq j_5=14$, say $l=12$, then \[x_{12}x_2x_5x_7x_{13}x_{15}\equiv x_2x_5x_7x_{11}x_{14}x_{15}\equiv x_2x_5x_{7}x_{10}x_{15}^2\in \bar{I}_G,\]
and \[x_{12}x_2x_5x_6x_{14}x_{15}\equiv x_2x_5x_6x_{11}x_{15}^2\in \bar{I}_G.\]
It follows that $x_{12}f\in\bar{I}_G$. By the same argument, we get $x_{13}f\in\bar{I}_G$ and $x_{14}f\in\bar{I}_G$.

\underline{Case 5:} If $14=j_5< l\leq j_6=15$, then $x_{15}f=x_2x_5(x_7x_{13}-x_6x_{14})x_{15}^2\in\bar{I}_G$.
}
\end{Example}

We now give an example which illustrates the technical procedure of the second part of the above proof.

\begin{Example}\label{btoa}{\rm
Consider the graph $G$ with the cliques $F_1=[1,5], F_2=[2,9], F_3=[6,14]$, $F_4=[10,17], F_5=[15,21], F_6=[18,22]$, and consider the polynomial 
\[f=x_2x_{6}x_{10}x_{15}+ x_2x_{6}x_{11}x_{21}+x_2x_{6}x_{12}x_{21}+x_2x_{6}x_{13}x_{21}+x_2x_{6}x_{14}x_{21}.\] 

We will show that there exists a variable $x_k$ such that $x_kf\notin \bar{I}_G$. Note that we are in the last subcase of Step 3 with $j=3$.

We observe that $\max\{l\in A_3\cap A_{4}\mid x_l \text{ divides some } u_i \in\Supp(f)\}=14=b_3$. Therefore, we multiply $f$ by $x_{b_{j-1}+2}=x_{11}$ and show that $x_{11}f\notin\bar{I}_G$. We have 
\begin{align*}
x_{11}u_1&=x_2x_{6}x_{10}x_{11}x_{15}\equiv x_2x_{6}x_{9}x_{12}x_{15}\equiv x_2x_{5}x_{10}x_{12}x_{15}\equiv 0~(\mod~ \bar{I}_G)\\
x_{11}u_2&=x_2x_{6}x_{11}^2x_{21}\equiv x_2x_{6}x_{10}x_{12}x_{21}\equiv x_2x_{6}x_{9}x_{13}x_{21}\equiv x_2x_{5}x_{10}x_{13}x_{21}\equiv0~(\mod~ \bar{I}_G)\\
x_{11}u_3&=x_2x_{6}x_{11}x_{12}x_{21}\equiv x_2x_{6}x_{10}x_{13}x_{21}\equiv x_2x_{6}x_{9}x_{14}x_{21}\equiv x_2x_{5}x_{10}x_{14}x_{21}\equiv0~(\mod~ \bar{I}_G)\\
x_{11}u_4&=x_2x_{6}x_{11}x_{13}x_{21}\equiv x_2x_{6}x_{10}x_{14}x_{21}\equiv x_2x_{6}x_{9}x_{15}x_{21}\equiv x_2x_{5}x_{10}x_{15}x_{21}\equiv0~(\mod~ \bar{I}_G)
\end{align*}
Finally, $x_{11}u_5=x_2x_{6}x_{11}x_{14}x_{21}\equiv x_2x_{6}x_{10}x_{15}x_{21}~(\mod~ \bar{I}_G)$. Since the latter monomial does not belong to $\ini_{\rev}(\bar{I}_G)$, it is not in $\bar{I}_G$ either. Hence, $x_{11}u_5\notin\bar{I}_G$. It follows that $x_{11}f\notin\bar{I}_G$, because, by the above four congruencies, $x_{11}u_5\equiv x_{11}f~(\mod~ \bar{I}_G)$.
}
\end{Example}

An immediate consequence of Theorem \ref{mainthm} is the following

\begin{Corollary}
If $I_G$ is Gorenstein, then $I_G$ has maximal regularity.
\end{Corollary}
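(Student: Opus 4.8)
The plan is to deduce the statement directly from Theorems \ref{maxreg} and \ref{mainthm}, by translating the Gorenstein conditions into the combinatorial criterion for maximal regularity. Recall that, for a connected closed graph, the maximal cliques $F_i=[a_i,b_i]$ satisfy $a_1<a_2<\cdots<a_r$, and, being the interval facets of $\Delta(G)$, also $b_1<b_2<\cdots<b_r$ (two distinct facets cannot be nested). Hence for three consecutive cliques we have $F_i\cap F_{i+1}\cap F_{i+2}=[a_{i+2},b_i]$, which is empty exactly when $a_{i+2}>b_i$, i.e.\ when $a_{i+2}\geq b_i+1$. By Theorem \ref{maxreg}, maximal regularity $\reg(S/I_G)=r$ is equivalent to this inequality holding for every $1\leq i\leq r-2$.

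First I would dispose of the connected case. Assume $G$ is connected and $I_G$ is Gorenstein. If $r=1$ then $G=K_n$, and the discussion preceding Lemma \ref{reg_is_r} forces $G=K_2$, for which $\reg(S/I_G)=1=r$ holds trivially. If $r\geq 2$, Theorem \ref{mainthm} yields the equalities $a_2=2$, $a_{i+2}=b_i+1$ for $1\leq i\leq r-2$, and $b_{r-1}=n-1$. The middle family $a_{i+2}=b_i+1$ is precisely the inequality $a_{i+2}\geq b_i+1$ extracted above, so $F_i\cap F_{i+1}\cap F_{i+2}=\emptyset$ for all admissible $i$, and Theorem \ref{maxreg} gives $\reg(S/I_G)=r$. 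Thus $I_G$ has maximal regularity.

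To remove connectedness I would chain together the two reductions already available. By Proposition \ref{congor}, $I_G$ is Gorenstein if and only if the ideal associated with each connected component $G_i$ is Gorenstein; by the connected case each such component ideal then has maximal regularity; and by the reduction recorded just before Theorem \ref{maxreg}, $I_G$ has maximal regularity if and only if every $I_{G_i}$ does. Stringing these together proves the corollary in general. I do not expect a genuine obstacle here: the content is essentially bookkeeping, and the single point deserving attention is the elementary dictionary between the set-theoretic condition $F_i\cap F_{i+1}\cap F_{i+2}=\emptyset$ and the arithmetic one $a_{i+2}\geq b_i+1$, which rests only on the endpoints of consecutive cliques being strictly increasing.
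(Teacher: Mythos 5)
Your proof is correct and follows exactly the route the paper intends, since the paper presents the corollary as an immediate consequence of Theorem \ref{mainthm}: the equalities $a_{i+2}=b_i+1$ from condition (b) give $F_i\cap F_{i+1}\cap F_{i+2}=\emptyset$, and Theorem \ref{maxreg} yields $\reg(S/I_G)=r$. Your added care with the dictionary $F_i\cap F_{i+1}\cap F_{i+2}=\emptyset \iff a_{i+2}\geq b_i+1$, the $r=1$ case, and the reduction to components via Proposition \ref{congor} and the remark preceding Theorem \ref{maxreg} simply makes explicit what the paper leaves to the reader, and all of it checks out.
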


\begin{Example}{\rm Assume that $G$ is connected and has two cliques, say, $F_1=[1,b], F_2=[a,n]$. By Theorem \ref{mainthm}, $I_G$ is Gorenstein if and only if 
$a=2$ and $b=n-1$. So there exists exactly one Gorenstein ideal $I_G$ when $G$ has two cliques. Since reg$(S/I_G)=2$, $I_G$ is extremal Gorenstein (\cite{Sch}, \cite{KSK}), and hence, according to \cite[Theorem B]{Sch}, its Betti numbers are 
\[\beta_{i,i+1}(S/I_G)=\binom {n}{i+1}i-\binom {n-1}{i-1}, ~{\rm for}~ 1\leq i\leq n-1,\] 

\[\beta_{i,i+2}(S/I_G)=0, ~{\rm for}~ 1\leq i\leq n-2,~{\rm and}\]

\[\beta_{n-1,n+1}(S/I_G)=1.\]
 
 }
\end{Example}

The following proposition generalizes the above theorem to all closed graphs.

\begin{Proposition}\label{congor}
 Let $G$ be a closed graph with the connected components $G_1,\dots,G_c$. Then $I_G$ is Gorenstein if and only if $I_{G_i}$ is Gorenstein for all $1\leq i\leq c$.
\end{Proposition}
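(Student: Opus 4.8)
The plan is to reduce the Gorenstein property of $I_G$ to a statement about the tensor product of the ideals $I_{G_i}$ and then invoke the standard fact that a tensor product of $K$-algebras is Gorenstein if and only if each factor is. The key structural input is that the connected components sit on disjoint intervals of vertices: if $G_i$ lives on the interval $[c_{i-1}+1,c_i]$, then the variables appearing in the generators of $I_{G_i}$ (namely the minors $g_{jk}$ indexed by edges within $G_i$) involve only $x_j$ for $j$ in a range determined by that interval, and consecutive components share at most the single "boundary" variable $x_{c_i+1}$ that closes one component and opens the next. First I would make this variable bookkeeping precise, so that $S/I_G$ can be written as a tensor product (over $K$, after a suitable localization or after passing to the initial ideal) of the rings associated with the individual components.

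The cleanest route is to use the same device already exploited in Proposition \ref{bettieq} and in the proof of Theorem \ref{maxreg}: work with $\ini_{\rev}(I_G)$, which splits as a sum
\[
\ini_{\rev}(I_G)=\sum_{i=1}^c\ini_{\rev}(I_{G_i}),
\]
where the summands are supported on essentially disjoint variable sets, overlapping only in the isolated boundary variables. Since $I_G$ and $\ini_{\rev}(I_G)$ have the same Hilbert function, and since both are Cohen--Macaulay (as recalled in the introduction from \cite{CDE}), $S/I_G$ is Gorenstein if and only if its $h$-vector is symmetric, equivalently if and only if $S/\ini_{\rev}(I_G)$ is Gorenstein. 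After killing the regular sequence $x_1,x_{n+1}$ (regular by the observation preceding Theorem \ref{mainthm}) and the remaining boundary variables, the zero-dimensional ring $\bar S/\bar I_G$ decomposes as a tensor product $\bigotimes_{i=1}^c \bar S_i/\bar I_{G_i}$ of the corresponding artinian rings of the components. The second step is then to compute the socle: for a tensor product of finite-dimensional local $K$-algebras one has
\[
\Soc\Bigl(\bigotimes_{i=1}^c R_i\Bigr)\cong\bigotimes_{i=1}^c\Soc(R_i),
\]
so $\dim_K\Soc=\prod_i\dim_K\Soc(R_i)$. This product equals $1$ if and only if each factor has one-dimensional socle, which by the socle criterion \cite[Proposition 21.5]{Ei2} is exactly the statement that each $\bar S_i/\bar I_{G_i}$, and hence each $S/I_{G_i}$, is Gorenstein.

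I expect the main obstacle to be the first step: justifying the tensor-product decomposition rigorously, including the handling of the shared boundary variables between consecutive components. One must check that modding out $x_1,x_{n+1}$ together with the internal boundary variables $x_{c_i+1}$ genuinely separates the support of the various $\bar I_{G_i}$ into disjoint variable sets, so that the reduced ring really is a tensor product over $K$ rather than something with hidden interaction. Once the components are seen to be supported on disjoint variables after this reduction, the socle computation and the tensor-product Gorenstein criterion are routine; the content is entirely in verifying that closedness of $G$ forces the promised interval structure on the components, which follows from the existence of a labeling making all facets of $\Delta(G)$ intervals, so distinct connected components occupy disjoint intervals of $[n]$ separated by the boundary variables that the regular sequence removes.
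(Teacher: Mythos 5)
Your core route is sound and genuinely different from the paper's. The paper never passes to an Artinian reduction: as in Proposition \ref{bettieq}, it uses the disjoint monomial supports of the $\ini_{\rev}(I_{G_i})$ together with \cite[Proposition 3.13]{BC} to get $\Tor_k(S/I_{G_i},S/I_{G_j})=0$ for $i\neq j$, $k>0$, deduces the factorization of Betti polynomials $B_{S/I_G}(s,t)=\prod_{i=1}^{c}B_{S/I_{G_i}}(s,t)$, and then reads off Gorensteinness from the Cohen--Macaulay type, i.e.\ from the Betti numbers $\beta_{n-c,j}(S/I_G)$ in the last homological degree. Your argument instead kills the $c+1$ boundary variables ($x_1$, $x_{n+1}$, and one variable between consecutive components), observes that the images of the $I_{G_i}$ then involve pairwise disjoint sets of variables, so the Artinian reduction is $\bigotimes_{i=1}^{c}\bar{S}_i/\bar{I}_{G_i}$, and uses multiplicativity of socle dimension. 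This works: the boundary variables occur in no generator of $\ini_{\rev}(I_G)$, and since $S/I_G$ is Cohen--Macaulay of dimension $c+1$ by \cite{CDE}, they form a system of parameters and hence a regular sequence (this settles the ``main obstacle'' you flag); and for Artinian graded $K$-algebras one checks $\Soc(R_1\otimes_K R_2)=\Soc(R_1)\otimes_K\Soc(R_2)$ by writing an element in terms of a $K$-basis of one factor. Your proof is more elementary---no Tor computations and no appeal to \cite{BC}---whereas the paper's yields the Betti polynomial identity as a byproduct.

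One intermediate claim in your write-up is false and must be deleted (fortunately nothing later depends on it): Cohen--Macaulayness plus a symmetric $h$-vector does \emph{not} imply Gorenstein, so it is not true that $S/I_G$ is Gorenstein if and only if $S/\ini_{\rev}(I_G)$ is. These very ideals provide counterexamples: take $n=4$ and $G$ with cliques $[1,3]$ and $[2,4]$. By Theorem \ref{mainthm}, $I_G$ is Gorenstein ($a_2=2$, $b_1=3=n-1$), but $\ini_{\rev}(I_G)=(x_2,x_3)^2+(x_3,x_4)^2$ has Artinian reduction $K[x_2,x_3,x_4]/(x_2^2,x_2x_3,x_3^2,x_3x_4,x_4^2)$, whose socle is spanned by $x_3$ and $x_2x_4$ and has dimension $2$, even though the $h$-vector $(1,3,1)$ is symmetric. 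So phrase the reduction entirely in terms of $\bar{S}/\bar{I}_G$, the actual binomial ideal modulo the regular sequence---which is in fact what your tensor decomposition and socle computation use---and let the initial ideal enter only to certify that the boundary variables avoid its support. Note also that the image of $I_{G_i}$ modulo all $c+1$ boundary variables coincides with its own Artinian reduction, since the generators of $I_{G_i}$ involve none of the other components' boundary variables; with that remark, the socle criterion \cite[Proposition 21.5]{Ei2} applied factorwise finishes the proof exactly as you describe.
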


\begin{proof}
 As in Section 1, let $B_M(s,t)=\sum_{i,j}\beta_{ij}s^it^j$ denote the Betti polynomial of a module $M$. Let $M_i$ be the minimal 
 set of monomial generators of $\ini_{\rev}(I_{G_i})$ for $1\leq i\leq c$. Then $M_i\cap M_j=\emptyset$ for all $i\neq j$. As in the proof of 
 Proposition \ref{bettieq} we derive that Tor$_k(S/I_{G_i}, S/I_{G_j})=0$ for $k>0$ and $i\neq j$, hence 
 \begin{align}  \label{keyequality}
 B_{S/I_G}(s,t)=\prod\limits_{i=1}^{c}B_{S/I_{G_i}}(s,t).
 \end{align}
 Let $r$ = reg$(S/I_G)$. Then $I_G$ is a Gorenstein ideal if and only if $\beta_{n-c,n-c+r}(S/I_G)=1$ and $\beta_{n-c,j}(S/I_G)=0$ for $j\leq n-c+r-1$. Let 
 $V(G_i)=\{n_{i-1}+1,\dots,n_{i-1}+n_i\}$, where $n_0=0$, and let $r_i$ = reg$(S/I_{G_i})$, for $1\leq i\leq c$. Clearly, by equality (\ref{keyequality}), it 
 follows that $\beta_{n-c,n-c+r}(S/I_G)=1$ if and only if, for all $1\leq i\leq c$, $\beta_{n_i-1,n_i-1+r_i}(S/I_{G_i})=1$.
 
 Now, if $I_{G_i}$ is Gorenstein for all $i$, then $\beta_{n_i-1,l}(S/I_{G_i})=0$ for $l<n_i-1+r_i$. By using equality (\ref{keyequality}) again, this implies that 
 $\beta_{n-c,j}(S/I_G)=0$ for $j<n-c+r$, thus $I_G$ is Gorenstein.
 
 For the converse, we argue by contradiction. Let us assume that $I_G$ is Gorenstein and that there exists $1\leq i\leq c$ such that $I_{G_i}$ is not Gorenstein. 
 Since $\beta_{n_i-1,n_i-1+r_i}(S/I_{G_i})=1$, there exists an integer $l<r_i$ such that $\beta_{n_i-1,n_i-1+l}(S/I_{G_i})\geq 1$. By using 
 (\ref{keyequality}), we get: \[\beta_{n-c,n-c+r-r_i+l}(S/I_G)\geq\prod\limits_{j\neq i}\beta_{n_j-1,n_j-1+r_j}(S/I_{G_j})\cdot\beta_{n_i-1,n_i-1+l}(S/I_{G_i}),\]
 thus, $\beta_{n-c,n-c+r-r_i+l}(S/I_G)\geq 1$,
 which is a contradiction to our hypothesis on $I_G$, since $r-r_i+l<r$. Therefore, $I_{G_i}$ is Gorenstein for $1\leq i\leq c$.
\end{proof}

{}

\end{document}